\newtheorem{theorem}{Theorem}[section]
\newtheorem{proposition}[theorem]{Proposition}
\newtheorem{corollary}[theorem]{Corollary}
\newtheorem{remark}[theorem]{Remark}
\newenvironment{proof}{\smallskip\par{\sc Proof.}\enspace}%
 {{\unskip\nobreak\hfil\penalty50\hskip2em
          \hbox{}\nobreak\hfil{\rule[-1pt]{5pt}{10pt}}
          \parfillskip=0pt\finalhyphendemerits=0
          \par\medskip}} 
\def\section{\@startsection {section}{1}{\z@}{3.25ex plus 1ex minus
 .2ex}{1.5ex plus .2ex}{\large\bf}}
\def\subsection{\@startsection{subsection}{2}{\z@}{3.25ex plus 1ex minus
 .2ex}{1.5ex plus .2ex}{\normalsize\bf}}
\begin{document}

\begin{center}
\LARGE\textsf{A note on a local limit theorem for Wiener space valued random variables}
\end{center}

\vspace*{.3in}

\begin{center}
\sc
\large{Alberto Lanconelli\footnote{Dipartimento di Matematica, Universit\'a degli Studi di Bari Aldo Moro, Via E. Orabona 4, 70125 Bari - Italia. E-mail: \emph{alberto.lanconelli@uniba.it}}
\quad Aurel I. Stan\footnote{Department of Mathematics, Ohio State University at Marion, 1465 Mount Vernon Avenue, Marion, OH 43302, U.S.A. E-mail: {\em stan.7@osu.edu}}}\\
\end{center}

\vspace*{.3in}

\begin{abstract}
We prove a local limit theorem, i.e. a central limit theorem for densities, for a sequence of independent and identically distributed random variables taking values on an abstract Wiener space; the common law of those random variables is assumed to be absolutely continuous with respect to the reference Gaussian measure. We begin by showing that the key roles of scaling operator and convolution product in this infinite dimensional Gaussian framework are played  by the Ornstein-Uhlenbeck semigroup and Wick product, respectively. We proceed by establishing a necessary condition on the density of the random variables for the local limit theorem to hold true. We then reverse the implication and prove under an additional assumption the desired $\mathcal{L}^1$-convergence of the density of $\frac{X_1+\cdot\cdot\cdot+X_n}{\sqrt{n}}$.  We close the paper comparing our result with certain Berry-Esseen bounds for multidimensional central limit theorems.
\end{abstract}

\bigskip

\noindent \textbf{Keywords:} local limit theorem, abstract Wiener space, Ornstein-Uhlenbeck semigroup, Wick product   

\bigskip

\noindent\textbf{Mathematics Subject Classification (2000):} 60F25, 60G15, 60H07.

\section{Introduction}
The classic one dimensional central limit theorem asserts that, for a given sequence $\{X_n\}_{n\geq 1}$ of independent and identically distributed random variables with mean zero and variance one, the sequence $\frac{X_1+\cdot\cdot\cdot+X_n}{\sqrt{n}}$ converges in distribution as $n\to +\infty$ to the standard normal law. One may wonder whether under more restrictive assumptions the previously mentioned convergence holds in some stronger sense. One can for instance be interested in the convergence of the density (with respect to the Lebesgue measure) of the law of $\frac{X_1+\cdot\cdot\cdot+X_n}{\sqrt{n}}$ towards the function $\frac{1}{\sqrt{2\pi}}e^{-\frac{x^2}{2}}$. This kind of results goes under the name of local limit theorem: Prokhorov (1952) established convergence in $\mathcal{L}^1$, Ranga Rao and Varadarajan (1960) obtained point-wise convergence while Gnedenko (1954) studied uniform convergence. Barron (1986) proved that the relative entropy (or Kullback-Leibler divergence) of $\frac{X_1+\cdot\cdot\cdot+X_n}{\sqrt{n}}$ with respect to the standard Gaussian measure tends to zero (monotonically along a certain subsequence). Infinite dimensional local limit theorems were considered by Bloznelis (2002), who provided a counterexample on the validity of Prokhorov's theorem on general Hilbert spaces, and by Davydov (1992) who suggested a variant of an infinite dimensional local limit theorem.    \\
The aim of the present note is to prove a local limit theorem for sequences of independent and identically distributed random variables taking values on an abstract Wiener space. The main novelty of our result consists in utilizing some notions and techniques from stochastic analysis as the infinite dimensional counterpart of the basic tools adopted to treat the finite dimensional case. In fact, we show in Proposition \ref{wick-convolution} below that the Wick product and the Ornstein-Uhlenbeck semigroup are the natural convolution product and scaling operator for densities in our infinite dimensional Gaussian setting, respectively. Then, by means of the results obtained by Da Pelo, et al. (2011) and the Nelson hyper-contractivity theorem we obtain under certain conditions the desired local limit theorem with an explicit rate of convergence. As a by-product of our method we obtain a dimension independent Berry-Essen bound for a large class of multivariate probability distributions.\\
The paper is organized as follows: Section 2 collects briefly notation and background material from the analysis on infinite dimensional Gaussian spaces while in Section 3 after few preparatory results and observations we state and prove our main theorem (see Theorem \ref{main theorem} below) followed by a detailed inspection of the finite dimensional case.

\section{Framework}
The aim of this section is to collect the necessary background
material and fix the notation. For more details the interested reader is
referred to the books of Bogachev (1998), Janson (1997) and Nualart (2006) .\\
Let $(H,W,\mu)$ be an \emph{abstract Wiener space}, that means
$(H,\langle\cdot,\cdot\rangle_H)$ is a separable Hilbert space which
is continuously and densely embedded in the Banach space
$(W,\Vert\cdot\Vert_W)$ and $\mu$ is a Gaussian probability measure
on the Borel sets $\mathcal{B}(W)$ of $W$ such that
\begin{eqnarray}\label{Gaussian characteristic}
\int_{W}e^{i\langle w,w^*\rangle}d\mu(w)=e^{-\frac{1}{2}\Vert
w^*\Vert_H^2},\quad\mbox{ for all }w^*\in W^*.
\end{eqnarray}
Here $W^*\subset H$ denotes the dual space of $W$, which is dense in $H$, and $\langle\cdot,\cdot\rangle$ stands for the dual
pairing between $W$ and $W^*$. We will refer to $H$ as the
\emph{Cameron-Martin} space of $W$. Set for $p\geq 1$
\begin{eqnarray*}
\mathcal{L}^p(W,\mu):=\Big\{f:W\to\mathbb{R}\mbox{ such that }\Vert
f\Vert_p:=\Big(\int_W|f(w)|^pd\mu(w)\Big)^{\frac{1}{p}}<+\infty\Big\}.
\end{eqnarray*}
It follows from
(\ref{Gaussian characteristic}) that the map
\begin{eqnarray*}
W^*&\to&\mathcal{L}^2(W,\mu)\\
w^*&\mapsto&\langle w,w^*\rangle
\end{eqnarray*}
is an isometry; we can therefore define for $\mu$-almost all $w\in
W$ the quantity $\langle w,\mathfrak{h}\rangle$ for $\mathfrak{h}\in H$  as an element of
$\mathcal{L}^2(W,\mu)$. This element will be denoted
by $\delta(\mathfrak{h})$.\\
Recall that by the Wiener-It\^o chaos
decomposition theorem any element $f$ in $\mathcal{L}^2(W,\mu)$ has
an infinite orthogonal expansion
\begin{eqnarray*}
f=\sum_{n\geq 0} \delta^n(\mathfrak{h}_n)
\end{eqnarray*}
where $\mathfrak{h}_n\in H^{\hat{\otimes}n}$, the space of symmetric elements
of $H^{\otimes n}$, and $\delta^n(\mathfrak{h}_n)$ stands for the multiple
Wiener-It\^o integral of $\mathfrak{h}_n$. For each $n\geq 0$ denote by $J_n$ the orthogonal projection onto the $n$-th Wiener chaos, i.e. for $f\in\mathcal{L}^2(W,\mu)$ with chaos expansion
$\sum_{n\geq 0} \delta^n(\mathfrak{h}_n)$, one has $J_n(f)=\delta^n(\mathfrak{h}_n)$. It is worth to mention that for any $p>1$ the operators $J_n$ can be extended to continuous linear operators from $\mathcal{L}^p(W,\mu)$ into itself.\\
For any $|\lambda|\leq 1$ define the operator $\Gamma(\lambda)$ acting
on $\mathcal{L}^2(W,\mu)$ as
\begin{eqnarray*}
\Gamma(\lambda)\Big(\sum_{n\geq 0}\delta^n(\mathfrak{h}_n)\Big):=\sum_{n\geq 0}
\lambda^n\delta^n(\mathfrak{h}_n).
\end{eqnarray*}
Observe that $\Gamma(\lambda)$ coincides with the
Ornstein-Uhlenbeck semigroup
\begin{eqnarray}\label{OU}
(P_tf)(w):=\int_Wf(e^{-t}w+\sqrt{1-e^{-2t}}\tilde{w})d\mu(\tilde{w}),\quad
w\in W, t\geq 0
\end{eqnarray}
(take $\lambda=e^{-t}$) and therefore it can be extended to a continuous linear operator on $\mathcal{L}^p(W,\mu)$ for every $p\geq 1$. One of the crucial features of the operator $\Gamma(\lambda)$ is the hyper-contractive property proved in the celebrated Nelson theorem (Nelson (1973)): for any $1\leq p\leq q\leq +\infty$ and $|\lambda|\leq \sqrt{\frac{p-1}{q-1}}$ one has the inequality
\begin{eqnarray}\label{nelson}
\Vert\Gamma(\lambda)f\Vert_q\leq \Vert f\Vert_p,\quad f\in\mathcal{L}^p(W,\mu).  
\end{eqnarray}
We also mention the useful property $\Gamma(\lambda_2)\Gamma(\lambda_1)=\Gamma(\lambda_2\cdot\lambda_1)$ which is equivalent to the semigroup property of (\ref{OU}).\\
We now define the Wick product: for $\mathfrak{h},\mathfrak{k}\in H$ set
\begin{eqnarray*}
\mathcal{E}(\mathfrak{h})\diamond\mathcal{E}(\mathfrak{k}):=\mathcal{E}(\mathfrak{h}+\mathfrak{k}).
\end{eqnarray*}
where 
\begin{eqnarray*}
\mathcal{E}(\mathfrak{h}):=\exp\Big\{\delta(\mathfrak{h})-\frac{1}{2}\Vert \mathfrak{h}\Vert^2_H\Big\}.
\end{eqnarray*}
This is called the \emph{Wick product} of $\mathcal{E}(\mathfrak{h})$ and
$\mathcal{E}(\mathfrak{k})$. Extend this operation by linearity to the linear span of the $\mathcal{E}(\mathfrak{h})$'s (which is dense in all the $\mathcal{L}^p(W,\mu)$'s) 
to get a commutative, associative and distributive (with respect to the sum)
multiplication. The Wick product is easily seen to be an unbounded bilinear form on the $\mathcal{L}^p(W,\mu)$ spaces; nevertheless, by applying the operator $\Gamma(\lambda)$ one obtains
\begin{eqnarray}\label{HYL}
\Vert\Gamma(\alpha)f\diamond\Gamma(\beta)g\Vert_p\le\Vert f\Vert_p\Vert g\Vert_p
\end{eqnarray}
for all $|\alpha|,|\beta|\leq 1$ with $\alpha^2+\beta^2\leq 1$ (see Da Pelo, et al. (2011)).\\
If the Wick product $f\diamond g$ of $f,g\in\mathcal{L}^p(W,\mu)$, $p>1$ exists in $\mathcal{L}^p(W,\mu)$, then for any $\mathfrak{h}\in H$ one has
\begin{eqnarray}\label{charact}
\int_W(f\diamond g)(w)\mathcal{E}(\mathfrak{h})(w)d\mu(w)=\int_Wf(w)\mathcal{E}(\mathfrak{h})(w)d\mu(w)\cdot\int_Wg(w)\mathcal{E}(\mathfrak{h})(w)d\mu(w).
\end{eqnarray}
In particular for $\mathfrak{h}=0$ one gets
\begin{eqnarray*}
\int_W(f\diamond g)(w)d\mu(w)=\int_Wf(w)d\mu(w)\cdot\int_Wg(w)d\mu(w).
\end{eqnarray*}
To conclude we mention the useful functorial behavior of $\Gamma(\lambda)$ with respect to the Wick product $\diamond$:
\begin{eqnarray}\label{functor}
\Gamma(\lambda)(f\diamond g)=\Gamma(\lambda)f\diamond\Gamma(\lambda)g.
\end{eqnarray} 
For additional information on the Wick product the reader is referred to the book of Holden, et al. (2009), the papers Da Pelo, et al. (2011), Da Pelo, et al. (2013) and the references quoted there.

\section{Main result}

In this section we are going to state and prove a local limit theorem for a sequence of independent and identically distributed random variables taking values on an abstract Wiener space. The next proposition  tells that in Gaussian spaces the role of the convolution product between functions is played by the Wick product. Similar results can be obtained for the chi-squared distribution (see Lanconelli and Sportelli (2012)) and the Poisson distribution (see Lanconelli and Stan (2013)).
\begin{proposition}\label{wick-convolution}
Let $X_1,...,X_n$ be independent random variables defined on a probability space $(\Omega,\mathcal{F},\mathcal{P})$ and taking values on the abstract Wiener space $(W,\mathcal{B}(W),\mu)$. Assume that for each $j\in\{1,...,n\}$ the law of $X_j$  is absolutely continuous with respect to the measure $\mu$ and denote its density by $f_j$. Choose $\alpha_1,...,\alpha_n\in [-1,1]$ such that $\sum_{j=1}^n\alpha_j^2=1$. Then the law of
$\alpha_1 X_1+\cdot\cdot\cdot+\alpha_n X_n$ is also absolutely continuous
with respect to $\mu$ with density given by
\begin{eqnarray*}
\Gamma({\alpha_1})f_1\diamond\cdot\cdot\cdot\diamond\Gamma({\alpha_n})f_n.
\end{eqnarray*}
\end{proposition}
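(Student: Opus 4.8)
The plan is to identify the law of $S:=\alpha_1 X_1+\cdots+\alpha_n X_n$ by computing its integral against the exponential test functions $\mathcal{E}(\mathfrak{h})$, $\mathfrak{h}\in H$, which form a determining family for finite Borel measures on $W$: their linear span is dense in every $\mathcal{L}^p(W,\mu)$, and, after analytic continuation $\mathfrak{h}\mapsto i\mathfrak{h}$ along directions $\mathfrak{h}=w^*\in W^*$, they recover the characteristic functional $w^*\mapsto\int_W e^{i\langle w,w^*\rangle}\,d(\cdot)$, which determines the measure. Writing $g:=\Gamma(\alpha_1)f_1\diamond\cdots\diamond\Gamma(\alpha_n)f_n$, the whole proof reduces to establishing the single identity
\begin{eqnarray*}
\int_W \mathcal{E}(\mathfrak{h})\,d\,\mathrm{law}(S)=\int_W \mathcal{E}(\mathfrak{h})\,g\,d\mu,\qquad \mathfrak{h}\in H,
\end{eqnarray*}
from which the absolute continuity of $\mathrm{law}(S)$ and the stated formula for its density follow at once.

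First I would check that $g$ is a well-defined element of $\mathcal{L}^1(W,\mu)$: iterating the hyper-contractive Wick bound (\ref{HYL}) under the hypothesis $\sum_j\alpha_j^2=1$ produces the $n$-fold estimate $\|\Gamma(\alpha_1)f_1\diamond\cdots\diamond\Gamma(\alpha_n)f_n\|_p\le\prod_j\|f_j\|_p$, so the product exists. Since the densities are only assumed to lie in $\mathcal{L}^1$, I would first run the argument for $f_j\in\mathcal{L}^p$ with $p>1$ (so that the characterization (\ref{charact}) is applicable) and then pass to general $\mathcal{L}^1$ densities by an $\mathcal{L}^1$-approximation, using that each $\Gamma(\alpha_j)$ is a contraction on $\mathcal{L}^1$. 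Granting existence, the right-hand side of the displayed identity is computed by applying (\ref{charact}) $n-1$ times, which factorizes it into $\prod_{j=1}^n\int_W\Gamma(\alpha_j)f_j\,\mathcal{E}(\mathfrak{h})\,d\mu$. Using the self-adjointness of $\Gamma(\alpha_j)$ together with the eigenrelation $\Gamma(\lambda)\mathcal{E}(\mathfrak{h})=\mathcal{E}(\lambda\mathfrak{h})$ (immediate from the chaos expansion $\mathcal{E}(\mathfrak{h})=\sum_{n\ge0}\frac{1}{n!}\delta^n(\mathfrak{h}^{\otimes n})$ and the definition of $\Gamma$), each factor becomes $\int_W f_j\,\mathcal{E}(\alpha_j\mathfrak{h})\,d\mu=\mathbb{E}[\mathcal{E}(\alpha_j\mathfrak{h})(X_j)]$.

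For the left-hand side I would exploit the pointwise multiplicative identity
\begin{eqnarray*}
\mathcal{E}(\mathfrak{h})(S)=\exp\Big(\textstyle\sum_j\alpha_j\delta(\mathfrak{h})(X_j)-\tfrac12\|\mathfrak{h}\|_H^2\Big)=\prod_{j=1}^n\mathcal{E}(\alpha_j\mathfrak{h})(X_j),
\end{eqnarray*}
which holds precisely because $\sum_j\alpha_j^2=1$; this is where the normalization enters. Indeed, expanding $\mathcal{E}(\alpha_j\mathfrak{h})(X_j)=\exp(\alpha_j\delta(\mathfrak{h})(X_j)-\tfrac12\alpha_j^2\|\mathfrak{h}\|_H^2)$ and multiplying, the quadratic terms sum to $\tfrac12(\sum_j\alpha_j^2)\|\mathfrak{h}\|_H^2=\tfrac12\|\mathfrak{h}\|_H^2$. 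The linearity $\delta(\mathfrak{h})(S)=\sum_j\alpha_j\delta(\mathfrak{h})(X_j)$ is exact for $\mathfrak{h}=w^*\in W^*$, where $\delta(w^*)(w)=\langle w,w^*\rangle$ is a genuine continuous functional on $W$, and for general $\mathfrak{h}\in H$ it holds $\mu$-a.e. by approximating $\mathfrak{h}$ in $H$ by elements of $W^*$, which is legitimate because $\mathrm{law}(X_j)\ll\mu$. Taking expectations and invoking the independence of $X_1,\dots,X_n$ yields $\mathbb{E}[\mathcal{E}(\mathfrak{h})(S)]=\prod_j\mathbb{E}[\mathcal{E}(\alpha_j\mathfrak{h})(X_j)]$, which is exactly the product obtained on the right-hand side; matching the two establishes the displayed identity.

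I expect the principal obstacle to be the final determining-class step rather than the algebra: one must argue that agreement of the two measures against all $\mathcal{E}(\mathfrak{h})$ forces them to coincide. The subtlety is that $\int \mathcal{E}(\mathfrak{h})\,d\nu$ is a Laplace-type (not Fourier) transform of the pushforward of $\nu$ under the Gaussian functionals $\delta(\mathfrak{h})$, so one needs sufficient integrability to justify the analytic continuation to the characteristic functional $e^{i\langle\cdot,w^*\rangle}$ and to then invoke its injectivity on finite measures on $W$. The companion technical point --- that the $n$-fold Wick product of merely $\mathcal{L}^1$ densities is well defined and that (\ref{charact}), stated for $p>1$, may be applied after an $\mathcal{L}^1$-approximation --- is where the borderline nature of the hypothesis $\sum_j\alpha_j^2=1$ in (\ref{HYL}) must be handled with care.
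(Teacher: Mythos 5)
Your computation is, at its algebraic core, the same as the paper's: factor by independence, move $\Gamma(\alpha_j)$ onto the density via self-adjointness and the eigenrelation $\Gamma(\lambda)\mathcal{E}(\mathfrak{h})=\mathcal{E}(\lambda\mathfrak{h})$, absorb the Gaussian normalizations using $\sum_j\alpha_j^2=1$, and reassemble the product of integrals into a single integral against the Wick product via (\ref{charact}). The one substantive divergence is your choice of test functions, and it is exactly there that your proof has a genuine gap. You test against the real exponentials $\mathcal{E}(\mathfrak{h})$, $\mathfrak{h}\in H$, and you correctly flag that the determining-class step then requires integrability and an analytic continuation. But under the stated hypotheses this integrability can actually fail: the densities $f_j$ are only assumed to be in $\mathcal{L}^1(W,\mu)$, and already in dimension one the density $f(x)=c\,e^{x^2/2}(1+x^2)^{-1}$ is $\mu$-integrable while $\int e^{tx}f(x)\,d\mu(x)=+\infty$ for every $t\neq 0$. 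So the quantity $\int_W\mathcal{E}(\mathfrak{h})\,d\,\mathrm{law}(S)$ on which your entire identification rests need not be finite, and the analytic continuation to the characteristic functional cannot even get started. This is not a removable technicality of your write-up; it is an obstruction intrinsic to using Laplace-type test functions at this level of generality.

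The repair is precisely what the paper does: run the identical computation against the bounded complex exponentials $w\mapsto e^{i\langle w,w^*\rangle}$, $w^*\in W^*$ (equivalently, $\mathcal{E}(iw^*)$ up to the deterministic factor $e^{\frac12\Vert w^*\Vert_H^2}$). Then every integral is trivially absolutely convergent, your pointwise factorization $e^{i\langle S,w^*\rangle}=\prod_j e^{i\alpha_j\langle X_j,w^*\rangle}$ holds everywhere (no $\mu$-a.e. issues, since $\langle\cdot,w^*\rangle$ is a genuine continuous functional, so your approximation of $\mathfrak{h}\in H$ by elements of $W^*$ is also unnecessary), and the determining-class step reduces to the standard uniqueness theorem for characteristic functionals of finite Borel measures on a separable Banach space. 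The only price is that (\ref{charact}) must be applied with the complex argument $i w^*$ in place of $\mathfrak{h}$, a mild extension which the paper also uses implicitly. Your preliminary observations --- that the $n$-fold Wick product exists in $\mathcal{L}^p$ by iterating (\ref{HYL}) with the functorial property (\ref{functor}), and that the $\mathcal{L}^1$ case should be reached by approximation --- are sound and in fact slightly more careful than the paper, which passes over the existence question in silence.
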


\begin{proof}
We start computing the Fourier transform of the law of $\alpha_1 X_1+\cdot\cdot\cdot+\alpha_n X_n$. Fix $w^*\in W^*$; then from the assumption of independence we get
\begin{eqnarray*}
E[\exp\{i\langle\alpha_1 X_1+\cdot\cdot\cdot+\alpha_n X_n,w^*\rangle\}]&=&E[\exp\{i\langle\alpha_1 X_1,w^*\rangle\}\cdot\cdot\cdot\exp\{i\langle\alpha_n X_n,w^*\rangle\}]\\
&=&E[\exp\{i\langle\alpha_1 X_1,w^*\rangle\}]\cdot\cdot\cdot E[\exp\{i\langle\alpha_n X_n,w^*\rangle\}]
\end{eqnarray*}
Observe that
\begin{eqnarray*}
E[\exp\{i\langle\alpha_j X_j,w^*\rangle\}]&=&\int_{W}e^{i\alpha_j\langle w,w^*\rangle}f_j(w)d\mu(w)\\
&=&e^{-\frac{\alpha_j^2}{2}\Vert w^*\Vert^2_H}\int_{W}e^{i\alpha_j\langle w,w^*\rangle+\frac{\alpha_j^2}{2}\Vert w^*\Vert^2_H}f_j(w)d\mu(w)\\
&=&e^{-\frac{\alpha_j^2}{2}\Vert w^*\Vert^2_H}\int_{W}\Gamma(\alpha_j)\Big(e^{i\langle w,w^*\rangle+\frac{1}{2}\Vert w^*\Vert^2_H}\Big)f_j(w)d\mu(w)\\
&=&e^{-\frac{\alpha_1^2}{2}\Vert w^*\Vert^2_H}\int_{W}e^{i\langle w,w^*\rangle+\frac{1}{2}\Vert w^*\Vert^2_H}(\Gamma(\alpha_j)f_j)(w)d\mu(w).
\end{eqnarray*}
Here we used the identity $\Gamma(\lambda)\mathcal{E}(\mathfrak{h})=\mathcal{E}(\lambda\mathfrak{h})$ and the self-adjointness of $\Gamma(\lambda)$. Therefore,
\begin{eqnarray*}
&&E[\exp\{i\langle\alpha_1 X_1+\cdot\cdot\cdot+\alpha_n X_n,w^*\rangle\}]\\
&=&E[\exp\{i\langle\alpha_1 X_1,w^*\rangle\}]\cdot\cdot\cdot E[\exp\{i\langle\alpha_n X_n,w^*\rangle\}]\\
&=&\prod_{j=1}^ne^{-\frac{\alpha_j^2}{2}\Vert w^*\Vert^2_H}\int_{W}e^{i\langle w,w^*\rangle+\frac{1}{2}\Vert w^*\Vert^2_H}(\Gamma(\alpha_j)f_j)(w)d\mu(w)\\
&=&e^{-\frac{1}{2}\Vert w^*\Vert^2_H}\prod_{j=1}^n\int_{W}e^{i\langle w,w^*\rangle+\frac{1}{2}\Vert w^*\Vert^2_H}(\Gamma(\alpha_j)f_j)(w)d\mu(w)\\
&=&e^{-\frac{1}{2}\Vert w^*\Vert^2_H}\int_{W}e^{i\langle w,w^*\rangle+\frac{1}{2}\Vert w^*\Vert^2_H}(\Gamma(\alpha_1)f_1\diamond\cdot\cdot\cdot\diamond\Gamma(\alpha_n)f_n)(w)d\mu(w)\\
&=&\int_{W}e^{i\langle w,w^*\rangle}(\Gamma(\alpha_1)f_1\diamond\cdot\cdot\cdot\diamond\Gamma(\alpha_n)f_n)(w)d\mu(w),
\end{eqnarray*}
where in the third equality we used the assumption $\sum_{j=1}^n\alpha_j^2=1$ while in the fourth equality we utilized the characterizing property of the Wick product (\ref{charact}). To sum up, we obtained the identity
\begin{eqnarray*}
E[\exp\{i\langle\alpha_1 X_1+\cdot\cdot\cdot+\alpha_n X_n,w^*\rangle\}]&=&\int_{W}e^{i\langle w,w^*\rangle}(\Gamma(\alpha_1)f_1\diamond\cdot\cdot\cdot\diamond\Gamma(\alpha_n)f_n)(w)d\mu(w)
\end{eqnarray*} 
which is precisely what we wanted to prove.
\end{proof}

We are now ready to treat our local limit theorem. We begin by providing a necessary condition which corresponds, from the point of view of the chaos decomposition, to the usual assumption of the classic central limit theorem. To illustrate this point, consider the following simple situation:\\
Suppose that the law $\nu$ of a real valued random variable $X$ is absolutely continuous with respect to the one dimensional standard Gaussian measure $\mu$. Denote by $f$ the density of $\nu$ with respect to $\mu$ and assume that $f\in\mathcal{L}^2(\mathbb{R},\mu)$. It is well known that the monic Hermite polynomials $\{h_n\}_{n\geq 0}$ constitute an orthogonal basis for $\mathcal{L}^2(\mathbb{R},\mu)$; one can therefore write
\begin{eqnarray*}
f(x)=\sum_{n\geq 0}a_nh_n(x),\quad a_n\in\mathbb{R}.
\end{eqnarray*}
Since $h_0(x)=1$, $h_1(x)=x$ and $h_2(x)=x^2-1$, if $X$ has mean zero and unit variance, one deduces that
\begin{eqnarray*}
a_0&=&\int_{\mathbb{R}}f(x)d\mu(x)=\int_{\mathbb{R}}d\nu(x)=1\\
a_1&=&\int_{\mathbb{R}}xf(x)d\mu(x)=\int_{\mathbb{R}}xd\nu(x)=E[X]=0\\
a_2&=&\frac{1}{2}\int_{\mathbb{R}}(x^2-1)f(x)d\mu(x)=\frac{1}{2}\Big(\int_{\mathbb{R}}x^2d\nu(x)-1\Big)=\frac{1}{2}(Var(X)-1)=0.\\
\end{eqnarray*}
Therefore the assumptions of the central limit theorem, i.e. mean zero and unit variance, imply that $f$ has to take the form
\begin{eqnarray*}
f(x)=1+\sum_{n\geq 3}a_nh_n(x).
\end{eqnarray*}

\begin{proposition}
Let $\{X_n\}_{n\geq 1}$ be a sequence of independent and identically distributed  random variables defined on a probability space $(\Omega,\mathcal{F},\mathcal{P})$ and taking values on the abstract Wiener space $(W,\mathcal{B}(W),\mu)$. Suppose that the common law of the $X_n$'s is absolutely continuous with respect to the measure $\mu$ and denote its density by $f$. Assume that $f\in\mathcal{L}^p(W,\mu)$ for some $p>1$.\\
If the density of $\frac{X_1+\cdot\cdot\cdot+X_n}{\sqrt{n}}$ converges to $1$ in $\mathcal{L}^1(W,\mu)$ as $n$ tends to infinity, then $f$ must be orthogonal to the first and second chaoses, i.e. $J_1f=0$ and $J_2f=0$.
\end{proposition}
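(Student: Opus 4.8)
The plan is to read the first two moments of the one--dimensional projections of $f$ off the convergence of the associated characteristic functions, and then to translate these moment conditions into the vanishing of the first and second chaos kernels of $f$. Write the chaos expansion $f=\sum_{k\ge 0}\delta^k(\mathfrak h_k)$; since $f$ is a density we have $J_0f=\int_W f\,d\mu=1$. Denote the common law by $\nu=f\,d\mu$. By Proposition \ref{wick-convolution} applied with $\alpha_1=\cdots=\alpha_n=1/\sqrt n$, the density of $S_n:=\frac{X_1+\cdots+X_n}{\sqrt n}$ is the $n$--fold Wick product $g_n=\Gamma(1/\sqrt n)f\diamond\cdots\diamond\Gamma(1/\sqrt n)f$, and by hypothesis $g_n\to 1$ in $\mathcal L^1(W,\mu)$. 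Fix $w^*\in W^*$ and $s\in\mathbb R$; since $w\mapsto e^{is\langle w,w^*\rangle}$ is bounded, the $\mathcal L^1$--convergence gives
\[
\int_W e^{is\langle w,w^*\rangle}g_n(w)\,d\mu(w)\longrightarrow \int_W e^{is\langle w,w^*\rangle}\,d\mu(w)=e^{-\frac{s^2}{2}\Vert w^*\Vert_H^2}.
\]
On the other hand, by independence the left--hand side equals $\chi(s/\sqrt n)^n$, where $\chi(t):=\int_W e^{it\langle w,w^*\rangle}f(w)\,d\mu(w)$ is the characteristic function, computed under $\nu$, of $Y:=\delta(w^*)=\langle\cdot,w^*\rangle$; the $Y_i:=\langle X_i,w^*\rangle$ are i.i.d. with this law.

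Thus $\chi(s/\sqrt n)^n\to e^{-\frac{s^2}{2}\Vert w^*\Vert_H^2}$ for every $s\in\mathbb R$, i.e. the normalized i.i.d. sums of $Y$ converge in law to $N(0,\Vert w^*\Vert_H^2)$. Here the hypothesis $f\in\mathcal L^p(W,\mu)$ with $p>1$ is essential: being Gaussian under $\mu$, $Y$ lies in every $\mathcal L^q(W,\mu)$, so Hölder's inequality gives $E_\nu[Y^2]=\int_W\delta(w^*)^2 f\,d\mu<+\infty$, hence $Y$ has finite variance under $\nu$. Applying the classical one--dimensional central limit theorem to the centered summands yields $\frac{1}{\sqrt n}\sum_{i=1}^n(Y_i-E_\nu[Y])\Rightarrow N(0,\mathrm{Var}_\nu(Y))$; decomposing the normalized sum as this centered part plus the drift $\sqrt n\,E_\nu[Y]$, convergence to a finite limit forces $E_\nu[Y]=0$, and matching the variance of the resulting limit with $N(0,\Vert w^*\Vert_H^2)$ forces $E_\nu[Y^2]=\Vert w^*\Vert_H^2$. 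I expect this passage to be the main obstacle, as it is the only point where the purely $\mathcal L^1$ (hence bounded--test--function) hypothesis must be upgraded to control of the unbounded moments $E_\nu[Y]$ and $E_\nu[Y^2]$; the bridge is the boundedness of the characters $e^{is\langle\cdot,w^*\rangle}$ together with the finiteness of the second moment supplied by $p>1$.

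It then remains to convert the two moment identities into chaos conditions. Using the self--adjointness of the projections $J_1,J_2$ and writing $J_1f=\delta(\mathfrak h_1)$, the first identity gives $0=E_\nu[Y]=\int_W\delta(w^*)f\,d\mu=\int_W\delta(w^*)J_1f\,d\mu=\langle w^*,\mathfrak h_1\rangle_H$ for all $w^*\in W^*$; since $W^*$ is dense in $H$, this forces $\mathfrak h_1=0$, i.e. $J_1f=0$. For the second identity I would invoke the Hermite--type relation $\delta(w^*)^2=\delta^2(w^*\otimes w^*)+\Vert w^*\Vert_H^2$; integrating against $f$ and recalling $\int_W f\,d\mu=1$ yields $E_\nu[Y^2]=\langle \delta^2(w^*\otimes w^*),J_2f\rangle+\Vert w^*\Vert_H^2$. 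Writing $J_2f=\delta^2(\mathfrak h_2)$ with $\mathfrak h_2\in H^{\hat{\otimes}2}$, the orthogonality relations give $\langle\delta^2(w^*\otimes w^*),\delta^2(\mathfrak h_2)\rangle=2\langle\mathfrak h_2,w^*\otimes w^*\rangle$, so the condition $E_\nu[Y^2]=\Vert w^*\Vert_H^2$ becomes $\langle\mathfrak h_2,w^*\otimes w^*\rangle=0$ for all $w^*\in W^*$. Since $\mathfrak h_2$ is symmetric, polarization recovers $\langle\mathfrak h_2,u\otimes v\rangle$ from the quadratic form $u\mapsto\langle\mathfrak h_2,u\otimes u\rangle$, which vanishes on the dense subspace $W^*$ and hence, by continuity, on all of $H$; therefore $\mathfrak h_2=0$, that is $J_2f=0$, which completes the proof.
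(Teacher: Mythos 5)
Your proof is correct, and while it shares the paper's opening move --- testing the $\mathcal{L}^1$-convergence against the bounded characters $e^{is\langle\cdot,w^*\rangle}$ and factorizing the resulting integral into an $n$-th power via independence --- it then diverges in how it extracts the chaos conditions. The paper stays inside the Gaussian-analysis toolkit: it smooths $f$ with $\Gamma(1/\sqrt{\gamma})$ (via Nelson hypercontractivity) so as to justify a Taylor expansion of the single-factor integral in powers of $1/\sqrt{n}$, whose first- and second-order coefficients are directly $\langle\mathfrak{h}_1,w^*\rangle_H$ and $\langle\mathfrak{h}_2,(w^*)^{\otimes 2}\rangle_{H^{\otimes 2}}$; the necessity of $\mathfrak{h}_1=\mathfrak{h}_2=0$ is then read off (somewhat tersely --- the paper phrases it as a sufficient condition and leaves the necessity implicit). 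You instead reduce to the classical one-dimensional situation: Lévy continuity identifies the limit law of the projections $\frac{1}{\sqrt n}\sum_i\langle X_i,w^*\rangle$, a tightness/drift argument forces $E_\nu[\delta(w^*)]=0$, variance matching forces $E_\nu[\delta(w^*)^2]=\Vert w^*\Vert_H^2$, and the Hermite relation $\delta(w^*)^2=\delta^2(w^*\otimes w^*)+\Vert w^*\Vert_H^2$ plus polarization converts these moment identities into $\mathfrak{h}_1=\mathfrak{h}_2=0$. Both routes use the hypothesis $f\in\mathcal{L}^p$, $p>1$, but for different purposes: the paper needs it to land in $\mathcal{L}^2$ after Ornstein--Uhlenbeck smoothing so the quadratic expansion makes sense, whereas you need it only to guarantee, via H\"older against the Gaussian integrability of $\delta(w^*)$, that the projections have finite second moment. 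Your version has the merit of making fully explicit the step the paper glosses over (why a nonvanishing first or second chaos is incompatible with the limit being $1$), at the cost of importing the classical CLT as a black box rather than rederiving it from the expansion.
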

\begin{proof}
According to Proposition \ref{wick-convolution} the density of  $\frac{X_1+\cdot\cdot\cdot+X_n}{\sqrt{n}}$ is given by
\begin{eqnarray*}
\Gamma\Big(\frac{1}{\sqrt{n}}\Big)f\diamond\cdot\cdot\cdot\diamond\Gamma\Big(\frac{1}{\sqrt{n}}\Big)f=\Big(\Gamma\Big(\frac{1}{\sqrt{n}}\Big)f\Big)^{\diamond n}.
\end{eqnarray*}
Assume that
\begin{eqnarray*}
\lim_{n\to +\infty}\Big\Vert\Big(\Gamma\Big(\frac{1}{\sqrt{n}}\Big)f\Big)^{\diamond n}-1\Big\Vert_1=0.
\end{eqnarray*}
This implies that for any $w^*\in W^*$
\begin{eqnarray*}
&&\lim_{n\to +\infty}\int_{W}\Big(\Gamma\Big(\frac{1}{\sqrt{n}}\Big)f\Big)^{\diamond n}(w)\exp\Big\{i\langle w,w^*\rangle+\frac{1}{2}\Vert w^*\Vert^2_H\Big\}d\mu(w)\\
&=&\int_{W}\exp\Big\{i\langle w,w^*\rangle+\frac{1}{2}\Vert w^*\Vert^2_H\Big\}d\mu(w)\\
&=& 1.
\end{eqnarray*}
On the other hand,
\begin{eqnarray*}
&&\int_{W}\Big(\Gamma\Big(\frac{1}{\sqrt{n}}\Big)f\Big)^{\diamond n}(w)\exp\Big\{i\langle w,w^*\rangle+\frac{1}{2}\Vert w^*\Vert^2_H\Big\}d\mu(w)\\
&=&\Big(\int_{W}\Gamma\Big(\frac{1}{\sqrt{n}}\Big)f(w)\exp\Big\{i\langle w,w^*\rangle+\frac{1}{2}\Vert w^*\Vert^2_H\Big\}d\mu(w)\Big)^n\\
&=&\Big(\int_{W}\Gamma\Big(\frac{\sqrt{\gamma}}{\sqrt{n}}\Big)\Big(\Gamma\Big(\frac{1}{\sqrt{\gamma}}\Big)f\Big)(w)\exp\Big\{i\langle w,w^*\rangle+\frac{1}{2}\Vert w^*\Vert^2_H\Big\}d\mu(w)\Big)^n\\
&=&\Big(1+i\frac{\sqrt{\gamma}}{\sqrt{n}}\langle \mathfrak{h}_1,w^*\rangle_H-\frac{\gamma}{n}\langle \mathfrak{h}_2,(w^*)^{\otimes 2}\rangle_{H^{\otimes 2}}+o\Big(\frac{1}{n}\Big)\Big)^n
\end{eqnarray*}
where the $\mathfrak{h}_k$'s are the kernels in the Wiener-It\^o chaos expansion of $\Gamma\Big(\frac{1}{\sqrt{\gamma}}\Big)f$ and $\gamma\geq 1$ is chosen big enough to guarantee that $\Gamma\Big(\frac{1}{\sqrt{\gamma}}\Big)f\in\mathcal{L}^2(W,\mu)$ (this can be done via inequality (\ref{nelson})). 
The limit of the last expression is $1$, for all $w^*\in W^*$, provided that $\mathfrak{h}_1=0$ and $\mathfrak{h}_2=0$ which in turn implies the same condition on the kernels of $f$.
\end{proof}

The following is the main result of the present paper. It reverses under an additional smoothness condition  the implication of the previous proposition. 

\begin{theorem}\label{main theorem}
Let $\{X_n\}_{n\geq 1}$ be a sequence of independent and identically distributed  random variables defined on a probability space $(\Omega,\mathcal{F},\mathcal{P})$ and taking values on the abstract Wiener space $(W,\mathcal{B}(W),\mu)$. Suppose that the common law of the $X_n$'s is absolutely continuous with respect to the measure $\mu$ and with a density of the form $\Gamma(\sqrt{\alpha})f$ where $f$ is a non negative element of $\mathcal{L}^p(W,\mu)$ for some $p>1$ and $\alpha\in ]0,1[$.\\ 
If the density of the $X_n$'s is orthogonal to the first and the second Wiener chaoses, then the density of  
\begin{eqnarray}\label{limit}
\frac{X_1+\cdot\cdot\cdot+X_n}{\sqrt{n}}
\end{eqnarray}
converges in $\mathcal{L}^1(W,\mu)$ to $1$ as $n$ tends to infinity with rate of convergence of order $\frac{1}{\sqrt{n}}$. 
\end{theorem}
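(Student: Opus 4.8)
The plan is to carry the classical proof of the local limit theorem via iterated convolutions over to Wiener space through the dictionary established above: the Wick product replaces convolution, $\Gamma(\lambda)$ replaces the scaling operator, the generalized Young inequality (\ref{HYL}) replaces the contractivity of convolution, and (\ref{nelson}) is used to manufacture the integrability that the hypothesis $f\in\mathcal L^p$ alone does not supply. First I would rewrite the object of interest. By Proposition \ref{wick-convolution}, the semigroup law, and the functoriality (\ref{functor}), the density of (\ref{limit}) is
\[ D_n:=\Big(\Gamma\big(\sqrt{\alpha/n}\big)f\Big)^{\diamond n}=\Gamma(\sqrt{\alpha})F_n,\qquad F_n:=\Big(\Gamma\big(1/\sqrt{n}\big)f\Big)^{\diamond n}. \]
Since $f\ge0$ and $\int_W f\,d\mu=J_0f=1$, both $F_n$ and $D_n$ are genuine probability densities, and since $J_kg=\alpha^{k/2}J_kf$ with $\alpha\neq0$, the hypothesis $J_1g=J_2g=0$ forces $J_1f=J_2f=0$. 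Hence $\phi:=f-1$ has a chaos expansion starting at order $3$; this is the exact analogue of the classical normalization ``mean zero, variance one'' and is what will generate the cubic gain responsible for the rate $1/\sqrt{n}$.

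Next I would expand. Because $1=\mathcal E(0)$ is the unit for $\diamond$ and the product is commutative and distributive, the Wick binomial theorem yields
\[ D_n-1=\sum_{j=1}^{n}\binom{n}{j}\Big(\Gamma\big(\sqrt{\alpha/n}\big)\phi\Big)^{\diamond j}. \]
I would estimate the $j$-th summand by splitting the scaling attached to each of its $j$ factors as $\sqrt{\alpha/n}=\lambda_{\mathrm{hyp}}\,\lambda_{\mathrm{gap}}\,\lambda_{\mathrm{Nel}}$: a fixed $\lambda_{\mathrm{Nel}}=\min\{1,\sqrt{p-1}\}$ used in (\ref{nelson}) to move $\phi$ from $\mathcal L^p$ into $\mathcal L^2$; a factor $\lambda_{\mathrm{gap}}=\sqrt{j\alpha/n}/\lambda_{\mathrm{Nel}}$ which, because $\phi$ carries no chaos below order $3$, produces in $\mathcal L^2$ the gain $\lambda_{\mathrm{gap}}^3$ via $\|\Gamma(\theta)\eta\|_2\le\theta^3\|\eta\|_2$; and $\lambda_{\mathrm{hyp}}=1/\sqrt j$, so that the $j$-fold version of (\ref{HYL}) (legitimate since $j\cdot(1/\sqrt j)^2=1$) bounds the Wick product by the product of the factor norms. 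This gives a per-term bound of the shape $\binom{n}{j}A^{j}(j/n)^{3j/2}$, with $A=\alpha^{3/2}\|\phi\|_p/\lambda_{\mathrm{Nel}}^3$. The term $j=1$ equals $n\cdot O(n^{-3/2})=O(n^{-1/2})$, which simultaneously isolates the leading behaviour and delivers the announced rate; every further term carries the additional decay $(j/n)^{3j/2}$.

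The main obstacle is the summation of this series, that is, the uniform-in-$n$ control of the high-order Wick powers with $j$ comparable to $n$. There the crude $\mathcal L^2$ estimates of the individual powers are not summable in general (the top term is only bounded by $A^{n}$, which need not tend to $0$), so the convergence is genuinely an $\mathcal L^1$ phenomenon rather than an $\mathcal L^2$ one. This is precisely where the two standing hypotheses must enter: the non-negativity of the density guarantees $\|D_n\|_1=1$ and hence an a priori $\mathcal L^1$ control that no $\mathcal L^2$ argument provides, while the smoothing form $g=\Gamma(\sqrt\alpha)f$ damps the $k$-th chaos by $\alpha^{k/2}$, and hence the $j$-th Wick power by $\alpha^{3j/2}$ (the factor already visible inside $A$), supplying the geometric damping that, combined with $(j/n)^{3j/2}$, tames the tail. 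I would therefore split off the $j=1$ term and control the remainder $\sum_{j\ge2}$ by (\ref{HYL}), (\ref{nelson}), and the $\alpha^{3j/2}$ damping, aiming to show it is $O(1/n)$ and thus negligible against the leading $O(1/\sqrt n)$. Verifying that this remainder converges uniformly in $n$ — balancing the binomial coefficients against the combinatorial growth of the Wick powers and the two damping factors, and using non-negativity to stay in $\mathcal L^1$ — is the technical heart of the argument.
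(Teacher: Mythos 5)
Your setup (Proposition \ref{wick-convolution}, functoriality, reduction of the hypotheses to $J_1f=J_2f=0$, and the use of (\ref{nelson}) and the multilinear form of (\ref{HYL}) to estimate Wick products in $\mathcal{L}^1$) matches the paper, and your $j=1$ term correctly produces the rate $n\cdot O(n^{-3/2})=O(n^{-1/2})$. But the proof does not close: the decomposition you chose, the Wick binomial expansion
\begin{equation*}
D_n-1=\sum_{j=1}^{n}\binom{n}{j}\Big(\Gamma\big(\sqrt{\alpha/n}\big)\phi\Big)^{\diamond j},\qquad \phi=f-1,
\end{equation*}
forces you to control high Wick powers of the \emph{signed} function $\phi$, and the tools you invoke cannot do it. Your own per-term bound is $\binom{n}{j}\,(j\alpha/n)^{3j/2}\lambda_{\mathrm{Nel}}^{-3j}\Vert\Gamma(\lambda_{\mathrm{Nel}})\phi\Vert_2^{\,j}$; with $\binom{n}{j}\le (en/j)^j$ this is $(j/n)^{j/2}C^{j}$ with $C=e\,\alpha^{3/2}\lambda_{\mathrm{Nel}}^{-3}\Vert\Gamma(\lambda_{\mathrm{Nel}})\phi\Vert_2$, and for $j$ comparable to $n$ this is of size $C^{n}$, which diverges unless $C<1$ — a smallness condition on $f$ that is not among the hypotheses. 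Neither the $\alpha^{3j/2}$ damping (already absorbed into $C$) nor the non-negativity of $f$ rescues this: non-negativity gives $\Vert f\Vert_1=\Vert D_n\Vert_1=1$, but it gives no control whatsoever on $\Vert\phi^{\diamond j}\Vert_1$, because $\phi$ is signed and Wick powers of signed functions are not $\mathcal{L}^1$-contractive. You correctly identify this as ``the technical heart,'' but you leave it unresolved, and with the estimates proposed it is not resolvable.

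The missing idea is to replace the binomial expansion by the telescoping (Lindeberg-type) identity
\begin{equation*}
g^{\diamond n}-1=\sum_{j=1}^{n} g^{\diamond (j-1)}\diamond\big(g-1\big),\qquad g=\Gamma\big(\sqrt{\alpha/n}\big)f,
\end{equation*}
which is what the paper uses. This has only $n$ terms, each containing exactly \emph{one} small factor $g-1$ and $j-1$ copies of $g$ itself. Splitting $\sqrt{\alpha/n}=\sqrt{\alpha}\cdot\frac{1}{\sqrt n}$ on the first block and $\sqrt{\alpha/n}=\sqrt{1-\alpha}\cdot\sqrt{\alpha/((1-\alpha)n)}$ on the second, inequality (\ref{HYL}) bounds each term by $\Vert\Gamma(1/\sqrt n)f^{\diamond(j-1)}\Vert_1\cdot\Vert\Gamma(\sqrt{\alpha/((1-\alpha)n)})f-1\Vert_1$. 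Here is where non-negativity genuinely enters: iterating (\ref{HYL}) gives $\Vert\Gamma(1/\sqrt n)f^{\diamond(j-1)}\Vert_1\le\Vert f\Vert_1^{j-1}=1$ because $f$ is a probability density. The remaining factor is $O(n^{-3/2})$ by exactly the Nelson/chaos-gap argument you describe for your $j=1$ term, and summing the $n$ terms gives $O(n^{-1/2})$ with no large-$j$ tail to tame. I recommend you rewrite the proof around this telescoping decomposition; the rest of your outline then goes through essentially verbatim.
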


\begin{remark}
The function $f$ in the statement of the previous theorem is itself a density function: it is by assumption non negative and its integral over the whole space is, due to the identity 
\begin{eqnarray*}
\int_W(\Gamma(\sqrt{\alpha})f)(w)d\mu(w)=\int_W f(w)d\mu(w),
\end{eqnarray*}
equal to one.
\end{remark}

\begin{remark}
The assumption that $X_n$ has a density of the form $\Gamma(\sqrt{\alpha})f$ in the statement of Theorem \ref{main theorem} has a clear probabilistic meaning; in fact, using Proposition \ref{wick-convolution} and the identity
\begin{eqnarray*}
\Gamma(\sqrt{\alpha})f&=&\Gamma(\sqrt{\alpha})f\diamond\Gamma(\sqrt{1-\alpha})1
\end{eqnarray*}
we deduce that $X_n$ has a density of the form $\Gamma(\sqrt{\alpha})f$ if and only if the law of $X_n$ is equal to the one of $\sqrt{\alpha} X+\sqrt{1-\alpha}Z$ where the density of $X$ is $f$ and $Z$ is an independent Gaussian random variable with law $\mu$ (and hence unit density). This smoothness condition corresponds with the one required by Linnik (1973) in proving a finite dimensional information-theoretic central limit theorem.   
\end{remark}

\begin{proof}
From Proposition \ref{wick-convolution} the density of $\frac{X_1+\cdot\cdot\cdot+X_n}{\sqrt{n}}$ is given by
\begin{eqnarray*}
\Gamma\Big(\frac{\sqrt{\alpha}}{\sqrt{n}}\Big)f\diamond\cdot\cdot\cdot\diamond\Gamma\Big(\frac{\sqrt{\alpha}}{\sqrt{n}}\Big)f=\Big(\Gamma\Big(\frac{\sqrt{\alpha}}{\sqrt{n}}\Big)f\Big)^{\diamond n}.
\end{eqnarray*}
Observe that in view of (\ref{functor}) we can write without ambiguity the right hand side of the previous equation as 
$\Gamma\Big(\frac{\sqrt{\alpha}}{\sqrt{n}}\Big)f^{\diamond n}$.\\ 
Our aim is to prove that
\begin{eqnarray*}
\lim_{n\to +\infty}\Big\Vert \Gamma\Big(\frac{\sqrt{\alpha}}{\sqrt{n}}\Big)f^{\diamond n}-1\Big\Vert_1=0.
\end{eqnarray*}
First of all, exploiting the associativity and distributivity of the Wick product we write 
\begin{eqnarray*}
\Gamma\Big(\frac{\sqrt{\alpha}}{\sqrt{n}}\Big)f^{\diamond
n}-1&=&\sum_{j=1}^n\Gamma\Big(\frac{\sqrt{\alpha}}{\sqrt{n}}\Big)f^{\diamond
j}-\Gamma\Big(\frac{\sqrt{\alpha}}{\sqrt{n}}\Big)f^{\diamond j-1}\\
&=&\sum_{j=1}^n\Gamma\Big(\frac{\sqrt{\alpha}}{\sqrt{n}}\Big)f^{\diamond
j-1}\diamond\Big(\Gamma\Big(\frac{\sqrt{\alpha}}{\sqrt{n}}\Big)f-1\Big).
\end{eqnarray*}
Now take the $\mathcal{L}^1(W,\mu)$-norm and use the triangle
inequality:
\begin{eqnarray*}
\Big\Vert\Gamma\Big(\frac{\sqrt{\alpha}}{\sqrt{n}}\Big)f^{\diamond
n}-1\Big\Vert_1&=&\Big\Vert\sum_{j=1}^n\Gamma\Big(\frac{\sqrt{\alpha}}{\sqrt{n}}\Big)f^{\diamond
j-1}\diamond\Big(\Gamma\Big(\frac{\sqrt{\alpha}}{\sqrt{n}}\Big)f-1\Big)\Big\Vert_1\\
&\leq&\sum_{j=1}^n\Big\Vert\Gamma\Big(\frac{\sqrt{\alpha}}{\sqrt{n}}\Big)f^{\diamond
j-1}\diamond\Big(\Gamma\Big(\frac{\sqrt{\alpha}}{\sqrt{n}}\Big)f-1\Big)\Big\Vert_1.
\end{eqnarray*}
Apply inequality (\ref{HYL}) (actually we need only the $\mathcal{L}^1$-form of the inequality which was proven
before in the paper Lanconelli and Stan (2010)) to get
\begin{eqnarray*}
\Big\Vert\Gamma\Big(\frac{\sqrt{\alpha}}{\sqrt{n}}\Big)f^{\diamond
n}-1\Big\Vert_1&\leq&\sum_{j=1}^n\Big\Vert\Gamma\Big(\frac{\sqrt{\alpha}}{\sqrt{n}}\Big)f^{\diamond
j-1}\diamond\Big(\Gamma\Big(\frac{\sqrt{\alpha}}{\sqrt{n}}\Big)f-1\Big)\Big\Vert_1\\
&\leq&\sum_{j=1}^n\Big\Vert\Gamma\Big(\frac{1}{\sqrt{n}}\Big)f^{\diamond
j-1}\Big\Vert_1\cdot\Big\Vert\Gamma\Big(\frac{\sqrt{\alpha}}{\sqrt{(1-\alpha)
n}}\Big)f-1\Big\Vert_1\\
&=&\Big\Vert\Gamma\Big(\frac{\sqrt{\alpha}}{\sqrt{(1-\alpha)
n}}\Big)f-1\Big\Vert_1\cdot\sum_{j=1}^n\Big\Vert\Gamma\Big(\frac{1}{\sqrt{n}}\Big)f^{\diamond
j-1}\Big\Vert_1.
\end{eqnarray*}
Observe that employing once again inequality (\ref{HYL}) we can bound the last sum as
\begin{eqnarray*}
\sum_{j=1}^n\Big\Vert\Gamma\Big(\frac{1}{\sqrt{n}}\Big)f^{\diamond
j-1}\Big\Vert_1&\leq&
\sum_{j=1}^n\Big\Vert\Gamma\Big(\frac{\sqrt{j-1}}{\sqrt{n}}\Big)f\Big\Vert_1^{
j-1}\\
&\leq&\sum_{j=1}^n\Vert f\Vert_1^{
j-1}\\
&=& n.
\end{eqnarray*}
Here we are using the fact that $f$ is a density function (in particular is non negative and with integral with respect to $\mu$ equal to one). Therefore
\begin{eqnarray}\label{last}
\Big\Vert\Gamma\Big(\frac{\sqrt{\alpha}}{\sqrt{n}}\Big)f^{\diamond
n}-1\Big\Vert_1&\leq&\Big\Vert\Gamma\Big(\frac{\sqrt{\alpha}}{\sqrt{(1-\alpha)n}}\Big)f-1\Big\Vert_1\cdot\sum_{j=1}^n\Big\Vert\Gamma\Big(\frac{1}{\sqrt{n}}\Big)f^{\diamond
j-1}\Big\Vert_1\nonumber\\
&\leq& n\cdot\Big\Vert\Gamma\Big(\frac{\sqrt{\alpha}}{\sqrt{(1-\alpha)
n}}\Big)f-1\Big\Vert_1.
\end{eqnarray}
Since we are assuming $f$ to be in $\mathcal{L}^p(W,\mu)$ for some
$p> 1$, by the Nelson hyper-contractive property (\ref{nelson}) there exists a
$\gamma\geq 1$ such that
$\Gamma\Big(\frac{1}{\sqrt{\gamma}}\Big)f\in\mathcal{L}^2(W,\mu)$.
Hence, choosing $n$ big enough to ensure that
$\frac{\alpha\gamma}{(1-\alpha) n}\leq 1$  we can write
\begin{eqnarray*}
\Big\Vert\Gamma\Big(\frac{\sqrt{\alpha}}{\sqrt{(1-\alpha)
n}}\Big)f-1\Big\Vert_1&=&\Big\Vert\Gamma\Big(\frac{\sqrt{\alpha\gamma}}{\sqrt{(1-\alpha)
n}}\Big)\Gamma\Big(\frac{1}{\sqrt{\gamma}}\Big)f-1\Big\Vert_1\\
&\leq&\Big\Vert\Gamma\Big(\frac{\sqrt{\alpha\gamma}}{\sqrt{(1-\alpha)
n}}\Big)\Gamma\Big(\frac{1}{\sqrt{\gamma}}\Big)f-1\Big\Vert_2\\
&=&\Big(\sum_{k\geq 3}k!\Big(\frac{\alpha\gamma}{(1-\alpha)
n}\Big)^k\Vert\mathfrak{h}_k\Vert_{H^{\otimes k}}^2\Big)^{\frac{1}{2}},
\end{eqnarray*}
where the $\mathfrak{h}_k$'s are the kernels in the Wiener-It\^o chaos
decomposition of $\Gamma\Big(\frac{1}{\sqrt{\gamma}}\Big)f$. Recall that the
assumptions on the densities of the random variables $X_k$'s and properties of the Ornstein-Uhlenbeck semigroup imply that the chaos expansion of
$\Gamma\Big(\frac{1}{\sqrt{\gamma}}\Big)f$ does not contain chaoses of the first and second orders.\\
Inserting the last estimate in (\ref{last}) we get
\begin{eqnarray}\label{rate}
\Big\Vert\Gamma\Big(\frac{\sqrt{\alpha}}{\sqrt{n}}\Big)f^{\diamond
n}-1\Big\Vert_1&\leq&
n\cdot\Big\Vert\Gamma\Big(\frac{\sqrt{\alpha}}{\sqrt{(1-\alpha)
n}}\Big)f-1\Big\Vert_1 \nonumber\\
&\leq&n\cdot\Big(\sum_{k\geq 3}k!\Big(\frac{\alpha\gamma}{(1-\alpha)
n}\Big)^k\Vert\mathfrak{h}_k\Vert_{H^{\otimes k}}^2\Big)^{\frac{1}{2}}\nonumber \\ 
&\leq&n\Big(\frac{\alpha\gamma}{(1-\alpha)n}\Big)^{\frac{3}{2}}\Big(\sum_{k\geq
3}k!\Vert\mathfrak{h}_k\Vert_{H^{\otimes k}}^2\Big)^{\frac{1}{2}}\nonumber\\ 
&=&\frac{1}{\sqrt{n}}\Big(\frac{\alpha\gamma}{1-\alpha}\Big)^{\frac{3}{2}}\Big(\sum_{k\geq
3}k!\Vert\mathfrak{h}_k\Vert_{H^{\otimes k}}^2\Big)^{\frac{1}{2}}
\end{eqnarray}
(recall that we are assuming $n\geq\frac{\alpha\gamma}{1-\alpha}$). The last series, being equal to $\Big\Vert \Gamma\Big(\frac{1}{\sqrt{\gamma}}\Big)f\Big\Vert_2^2-1$, is convergent; we can therefore pass to the limit as $n$ tends to infinity and obtain the desired result.
\end{proof}

\subsection{The finite dimensional case: a dimension independent Berry-Esseen bound}

Our main result, Theorem \ref{main theorem}, provides a local limit theorem for independent and identically distributed random variables taking values on an abstract Wiener space $(W,H,\mu)$. Observe that for any $d\in\mathbb{N}$ the Euclidean space $\mathbb{R}^d$ together with a standard $d$-dimensional Gaussian measure $\mu$ is an example of such a space (in this case $W=H=\mathbb{R}^d$); therefore, the conclusion of Theorem \ref{main theorem} remains valid in this finite dimensional framework.\\ 
We now want to analyze this particular case in some detail. Let $\{X_n\}_{n\geq 1}$ be a sequence of independent and identically distributed $d$-dimensional random vectors. Assume that the common law of the $X_n$'s is absolutely continuous with respect to $\mu$ with a density $g$ belonging to $\mathcal{L}^2(\mathbb{R}^d,\mu)$ (as before we can replace the exponent $2$ with $p>1$ and use Nelson's estimate).  In Theorem \ref{main theorem} we assumed that: \\

\noindent $i)$\quad $g$ is of the form $\Gamma(\sqrt{\alpha})f$ for some $\alpha\in ]0,1[$ and a non negative $f$ in $\mathcal{L}^2(\mathbb{R}^d,\mu)$;\\

\noindent $ii)$\quad $g$ is orthogonal to the first and second Wiener chaoses.\\

\noindent These two conditions are equivalent respectively to \\   

\noindent $i')$\quad $g$ is of the form 
\begin{eqnarray*}
g(x)=\int_{\mathbb{R}^d}f(\sqrt{\alpha}x+\sqrt{1-\alpha}y)d\mu(y),\quad x\in\mathbb{R}^d
\end{eqnarray*}
for some $\alpha\in ]0,1[$ and a non negative $f$ in $\mathcal{L}^2(\mathbb{R}^d,\mu)$;\\

\noindent $ii')$\quad $E[X_n]=0$ and the covariance matrix of the vector $X_n$ is the identity matrix.\\

\noindent The equivalence between $i)$ and $i')$ is known in the literature as the Mehler's formula (e.g. see Janson (1997)). Concerning the second equivalence, observe that the sets of functions 
\begin{eqnarray*}
\{h_j(x)=x_j,\quad j=1,...,d\}\quad\mbox{ and }\quad\{l_{ij}(x)=x_i x_j-\delta_{ij},\quad i,j=1,...,n\}
\end{eqnarray*}
 constitute an orthogonal basis for the first and second homogeneous chaoses, respectively. Therefore, if $g$ satisfies $ii)$ then
\begin{eqnarray*}
E[X^i_n]&=&\int_{\mathbb{R}^d}x_ig(x)d\mu(x)\\
&=&0,\quad\mbox{ for all }i=1,...,d
\end{eqnarray*} 
and
\begin{eqnarray*}
Cov(X_n^i,X_n^j)&=&E[X_n^iX_n^j]-E[X_n^i]E[X_n^j]\\
&=&\int_{\mathbb{R}^d}x_i x_jg(x)d\mu(x)\\
&=&\int_{\mathbb{R}^d}\delta_{ij}g(x)d\mu(x)\\
&=&\delta_{ij}\quad\mbox{ for all }i,j=1,...,d
\end{eqnarray*}
which corresponds to $ii')$ (the converse is clearly also true).\\
Recall in addition that the total variation distance between two probability measures on $\mathbb{R}^d$, say $\nu_1$ and $\nu_2$, is defined by
\begin{eqnarray*}
d_{TV}(\nu_1,\nu_2):=\sup_{A\in\mathcal{B}(\mathbb{R}^d)}|\nu_1(A)-\nu_2(A)|;
\end{eqnarray*}
moreover, if $\nu_1$ and $\nu_2$ are absolutely continuous with respect to $\mu$ with densities $f_1$ and $f_2$ respectively, then one has
\begin{eqnarray*}
d_{TV}(\nu_1,\nu_2)=\frac{1}{2}\int_{\mathbb{R}^d}|f_1(x)-f_2(x)|d\mu(x).
\end{eqnarray*}
With this notation at hand and following the preceding discussion, we can reformulate Theorem \ref{main theorem} as follows.
\begin{corollary}\label{corollary}
Let $\{X_n\}_{n\geq 1}$ be a sequence of independent and identically distributed $d$-dimensional random vectors. Assume that the common law of the $X_n$'s is absolutely continuous with respect to $\mu$ with a density $g$ belonging to $\mathcal{L}^2(\mathbb{R}^d,\mu)$. If conditions $i')$ and $ii')$ from above are satisfied, then for $n\geq\frac{\alpha}{1-\alpha}$ one has
\begin{eqnarray}\label{total variation}
d_{TV}(\nu_{S_n},\mu)\leq \frac{1}{2}\frac{1}{\sqrt{n}}\Big(\frac{\alpha}{1-\alpha}\Big)^{\frac{3}{2}}(\Vert f\Vert_2^2-1)^{\frac{1}{2}}
\end{eqnarray}
where $\nu_{S_n}$ denotes the law of $S_n:=\frac{X_1+\cdot\cdot\cdot+X_n}{\sqrt{n}}$.
\end{corollary}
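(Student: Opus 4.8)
The plan is to recognize that the corollary is essentially Theorem \ref{main theorem} specialized to the finite dimensional Gaussian space $(W,H,\mu)=(\mathbb{R}^d,\mathbb{R}^d,\mu)$, sharpened by retaining the explicit constants already produced in the quantitative estimate (\ref{rate}) inside that theorem's proof. The one genuinely new ingredient is the passage from the $\mathcal{L}^1$-distance of densities to the total variation distance, which is supplied by the identity $d_{TV}(\nu_1,\nu_2)=\frac{1}{2}\int_{\mathbb{R}^d}|f_1-f_2|\,d\mu$ recalled just before the statement.

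First I would invoke the equivalences established in the discussion preceding the corollary. Mehler's formula shows that $i')$ coincides with condition $i)$, namely $g=\Gamma(\sqrt{\alpha})f$ with $f\geq 0$ and $f\in\mathcal{L}^2(\mathbb{R}^d,\mu)$; and the explicit orthogonal bases $\{x_j\}$ and $\{x_ix_j-\delta_{ij}\}$ of the first and second homogeneous chaoses show that $ii')$ (zero mean, identity covariance) coincides with condition $ii)$, that $g$ is orthogonal to the first and second chaoses. Since $\Gamma(\sqrt{\alpha})$ merely rescales the $k$-th chaos by the nonzero factor $\alpha^{k/2}$, orthogonality of $g$ to these chaoses is equivalent to orthogonality of $f$, so $\mathfrak{h}_1=\mathfrak{h}_2=0$ for the chaos kernels of $f$. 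This places us precisely in the hypotheses of Theorem \ref{main theorem}.

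Next I would apply Proposition \ref{wick-convolution} to identify the density of $S_n=\frac{X_1+\cdots+X_n}{\sqrt{n}}$ as $\Gamma(\sqrt{\alpha}/\sqrt{n})f^{\diamond n}$, and then reuse the chain of inequalities that culminates in (\ref{rate}). The decisive simplification in the finite dimensional $\mathcal{L}^2$ setting is that the Nelson hypercontractivity step becomes superfluous: because $f$ already belongs to $\mathcal{L}^2(\mathbb{R}^d,\mu)$, I may take $\gamma=1$, so the kernels $\mathfrak{h}_k$ in (\ref{rate}) are exactly the chaos kernels of $f$ and the admissibility condition $\frac{\alpha\gamma}{(1-\alpha)n}\leq 1$ reduces to $n\geq\frac{\alpha}{1-\alpha}$. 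Estimate (\ref{rate}) then reads
$$\left\Vert \Gamma\!\left(\frac{\sqrt{\alpha}}{\sqrt{n}}\right)f^{\diamond n}-1\right\Vert_1\leq\frac{1}{\sqrt{n}}\left(\frac{\alpha}{1-\alpha}\right)^{3/2}\left(\sum_{k\geq 3}k!\,\Vert\mathfrak{h}_k\Vert^2_{H^{\otimes k}}\right)^{1/2}.$$

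Finally I would evaluate the remaining series. Since $f$ is a density its zeroth kernel satisfies $0!\,\Vert\mathfrak{h}_0\Vert^2=1$, and by $ii)$ the first and second chaos contributions vanish, so Parseval's identity in $\mathcal{L}^2(\mathbb{R}^d,\mu)$ gives $\sum_{k\geq 3}k!\,\Vert\mathfrak{h}_k\Vert^2=\Vert f\Vert_2^2-1$. Substituting this into the displayed bound and inserting the factor $\frac{1}{2}$ from the total variation identity yields exactly (\ref{total variation}). The argument is thus a matter of specialization and bookkeeping; I expect no substantive analytic obstacle beyond what Theorem \ref{main theorem} has already overcome, and the only point demanding care is verifying that taking $\gamma=1$ correctly transports both the constant $\big(\tfrac{\alpha}{1-\alpha}\big)^{3/2}(\Vert f\Vert_2^2-1)^{1/2}$ and the admissible range $n\geq\frac{\alpha}{1-\alpha}$.
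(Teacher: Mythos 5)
Your proposal is correct and follows essentially the same route as the paper, whose proof is the single observation that (\ref{total variation}) follows from (\ref{rate}) with $\gamma=1$ since $f\in\mathcal{L}^2(\mathbb{R}^d,\mu)$; your additional bookkeeping (the equivalences $i)\Leftrightarrow i')$, $ii)\Leftrightarrow ii')$, the identification $\sum_{k\geq 3}k!\Vert\mathfrak{h}_k\Vert^2=\Vert f\Vert_2^2-1$, and the factor $\tfrac{1}{2}$ from the total variation identity) merely makes explicit what the paper leaves to the surrounding discussion.
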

\begin{proof}
Inequality (\ref{total variation}) follows from (\ref{rate}) where we can take $\gamma=1$ since $f\in\mathcal{L}^2(\mathbb{R}^d,\mu)$.
\end{proof}
Inequality (\ref{total variation}) provides a Berry-Esseen type bound which depends on $\alpha$, on the second moment of the density $f$ but not on the dimension $d$. This is in contrast to a series of known results where the right hand side of (\ref{total variation}) depends on $d$. More precisely, Bentkus (2003) (see also the references quoted there for earlier results) proves under the assumption $ii')$ from above and the finiteness of $\beta:=E[\Vert X_n\Vert^3]$ (here $\Vert\cdot\Vert$ is the $d$-dimensional Euclidean norm) the inequality
\begin{eqnarray}\label{bentkus}
\sup_{A\in\mathcal{C}}|\nu_{S_n}( A)-\mu(A)|\leq 400\cdot\beta\cdot\frac{d^{\frac{1}{4}}}{\sqrt{n}}
\end{eqnarray}
where $\mathcal{C}$ is the class of convex sets. The bound in (\ref{bentkus}) contains the best known dependence on the dimension under those assumptions. Our Corollary \ref{corollary} requires more stringent conditions, namely $i')$, but has the advantage of being dimension independent.\\

\noindent {\textbf{Acknowledgments}}

We are grateful to Professor Youri Davydov for his fruitful comments and pointing our attention to the references Bloznelis (2002) and Davydov (1992).
\vspace*{8pt}

Barron, A.R. (1986). Entropy and the central limit theorem. \emph{Ann. Probab.}, 14, 336-342.\\

Bentkus, V. (2003). On the dependence of the Berry-Esseen bound on dimension. \emph{J. Statist. Plann Inference}, 113, 385-402.\\

Bloznelis, M. (2002). A note on the multivariate local limit theorem. \emph{Statistics and Probability Letters}, 59, 227-233.\\

Bogachev, V.I. (1998). \emph{Gaussian Measures}. American Mathematical Society, Providence.\\

Da Pelo, P., Lanconelli A. and Stan, A.I. (2011). A H\"older-Young-Lieb
inequality for norms of Gaussian Wick products. \emph{Inf. Dim.
Anal. Quantum Prob. Related Topics}, 14, 375-407.\\

Da Pelo, P., Lanconelli, A. and Stan, A.I. (2013). An It\^o formula for a
family of stochastic integrals and related Wong-Zakai theorems.
\emph{Stochastic Processes and their Applications}, 123, 3183-3200.\\

Davydov, Y. (1992). A variant of an
infinite-dimensional local limit theorem. \emph{Journal of Soviet Mathematics [1]}, 61, 1853-1856.\\

Gnedenko, B.V. (1954). Local limit theorem for densities. \emph{Doklady Akad. Nauk SSSR}, 95, 5-7.\\

Holden, H., {\O}ksendal, B., Ub{\o}e, J. and Zhang, T.-S. (2009).
\emph{Stochastic Partial Differential Equations- A Modeling, White
Noise Functional Approach, II edition}. Springer, New York.\\

Janson, S. (1997). \emph{Gaussian Hilbert spaces}. Cambridge Tracts in
Mathematics, 129. Cambridge University Press, Cambridge.\\

Lanconelli, A. and Sportelli, L. (2012). Wick calculus for the square of a Gaussian random variable with application to Young and hypercontractive inequalities. \emph{Inf. Dim.
Anal. Quantum Prob. Related Topics} 15, 16 pages.\\

Lanconelli, A. and Stan, A.I. (2010). Some norm inequalities for Gaussian
Wick Products. \emph{Stochastic Analysis and Applications}, 28, 523-539.\\

Lanconelli, A. and Stan, A.I. (2013). A H\"older inequality for norms of Poissonian Wick products. \emph{Inf. Dim.
Anal. Quantum Prob. Related Topics}, 16, 39 pages.\\

Linnik, Y.V. (1973). An information-theoretic proof of the central limit theorem with the Lindberg condition. \emph{Theory Probab. Appl.}, 4, 288-299.\\

Nelson, E. (1973). The free Markoff field. \emph{J. Functional Analysis}, 12, 211-227.\\

Nualart, D. (2006). \emph{Malliavin calculus and Related Topics, II edition}. Springer, New York.\\

Prokhorov, Y.V. (1952). On a local limit theorem for densities. \emph{Doklady Akad. Nauk SSSR}, 83, 797-800.\\

Ranga Rao, R. and Varadarajan, V.S. (1960). A limit theorem for densities. {\em Sankhya}, 22, 261-266.

\end{document}